\newtheorem{thm}{Theorem}[section]
\newtheorem{lem}{Lemma}[section]
\newtheorem{rem}{Remark}[section]
\theoremstyle{notation}
\newcommand{\R}{\mathbb{R}}
\newcommand{\C}{\mathbb{C}}
\numberwithin{equation}{section}
\makeatletter \@addtoreset{equation}{section} \makeatother
\newcounter{const}
\author[{}]{Tianxiang Gou$^{\MakeLowercase a}$, Vicen\c tiu D. R\u adulescu$^{\MakeLowercase{b,c}}$, Zhitao Zhang$^{\MakeLowercase{d,e}}$}
\address[T. Gou]{%
\centerline{$^a$School of Mathematics and Statistics, Xi’an Jiaotong University,}
\centerline{710049 Xi’an, Shaanxi, China}}
\address[V.D. R\u adulescu]{%
	\centerline{$^b$Faculty of Applied Mathematics, AGH University of Krak\'ow,} 
	\centerline{30-059 Krak\'ow, Poland}
	\centerline{$^c$Department of Mathematics, University of Craiova,} 
	\centerline{200585 Craiova, Romania}}
\address[Z. Zhang]{%
\centerline{$^d$Academy of Mathematics and Systems Science, Chinese Academy of Sciences,}
\centerline{100190 Beijing, China}
\centerline{$^e$School of Mathematical Sciences, University of Chinese Academy of Sciences,}
\centerline{100149 Beijing, China}}
\subjclass[2010]{Primary: 35R11; Secondary: 35B44.}
\keywords{Blow-up; Fractional NLS; Mass critical case; Mass supercritical case; Cylindrical symmetry solutions}
\email{tianxiang.gou@xjtu.edu.cn; radulescu@inf.ucv.ro; zzt@math.ac.cn}
\title[Blow-up for Fractional NLS]{Blow-up of cylindrically symmetric solutions for Fractional NLS}
\begin{document}

\begin{abstract}
In this paper, we consider blow-up of solutions to the Cauchy problem for the following fractional  NLS,
$$
\textnormal{i} \, \partial_t u=(-\Delta)^s u-|u|^{2 \sigma} u \quad \text{in} \,\, \R \times \R^N,
$$
where $N \geq 2$, $1/2 <s<1$ and $0<\sigma<2s/(N-2s)$. In the mass critical and supercritical cases, we establish a criterion for blow-up of solutions to the problem for cylindrically symmetric data. The results extend the known ones with respect to blow-up of solutions to the problem for radially symmetric data in \cite{BHL}.
\end{abstract}
\maketitle
\thispagestyle{empty}

\section{Introduction}

In this paper, we are concerned with blow-up of cylindrically symmetric solutions to the following fractional NLS,
\begin{align} \label{equ}
\left\{
\begin{aligned}
&\textnormal{i} \, \partial_t u=(-\Delta)^s u-|u|^{2 \sigma} u, \\
& u(0)=u_0 \in H^s(\R^N),
\end{aligned}
\right.
\end{align}
where $N \geq 2$, $1/2 <s<1$ and $0<\sigma<2s/(N-2s)$. The fractional Laplacian $(-\Delta)^s$ is characterized as $\mathcal{F}((-\Delta)^{s}u)(\xi)=|\xi|^{2s} \mathcal{F}(u)(\xi)$ for $\xi \in \R^N$, where $\mathcal{F}$ denotes the Fourier transform. Equation \eqref{equ} that was introduced by Laskin in \cite{L1, L2} can be seen as a canonical model for a nonlocal dispersive PDE with focusing nonlinearity. Evolution equations with nonlocal dispersion as \eqref{equ} naturally arise in various physical settings, such as in the continuum limit of discrete models with long-range interactions \cite{KLS} and in the description of Boson stars as well as in water wave dynamics \cite{Cai, ES, MMT}.

When $s=1$, the early study of the existence of finite time blow-up solutions to \eqref{equ} for initial data with finite variance is due to Glassy in \cite{Gl}. The result was later extended by Holmer and Roudenko in \cite{HR} and Ogawa and Tsutsumi in \cite{OT} to radially symmetric initial data with infinite variance. 
While $0<s<1$, despite that \eqref{equ} bears a strong resemblance to the classical NLS, the existence of blow-up solutions to \eqref{equ} was open for a long time until the work of Boulenger et al. \cite{BHL}. In \cite{BHL}, they proved a general criterion for blow-up of solutions to \eqref{equ} with radially symmetric data. 
Nevertheless, the consideration of blow-up solutions to \eqref{equ} with non-radially symmetric data left open so far. Inspired by the aforementioned works, the first aim of the present paper is to investigate blow-up of solutions to \eqref{equ} with initial data belonging to $\Sigma_N$ defined by
$$
\Sigma_N :=\left\{ u \in H^s(\R^N) : u(y, x_N) =u(|y|, x_N), x_N u \in L^2(\R^N)\right\},
$$
where $x=(y, x_N) \in \R^N$ and $y=(x_1, \cdots, x_{N-1}) \in \R^{N-1}$. And we derive the existence of finite time blow-up of solutions to \eqref{equ} with initial data belonging to $\Sigma_N$ in the mass supercritical case $\sigma>{2s}/{N}$, see Theorem \ref{thm1}. The second aim of the paper is to establish blow-up of solutions to \eqref{equ} with initial data belonging to $\Sigma$ defined by
$$
\Sigma :=\left\{ u \in H^s(\R^N) : u(y, x_N) =u(|y|, x_N)\right\}.
$$
And we obtain the existence of finite time blow-up of solutions to \eqref{equ} with initial data belonging to $\Sigma$ for $\sigma>{2s}/{(N-1)}$, see Theorem \ref{thm11}. It is worth quoting \cite{BF, BFG, DF, F, Gou}, where blow-up of cylindrically symmetric solutions to the local NLS with initial data in $\Sigma_N$ was well-considered. And these results can be regarded as extensions of the ones in the seminal work due to Martel \cite{M}. Since the problem under our consideration is nonlocal, then essential arguments we adapt here are greatly different from the ones used to deal with the local NLS.

Let us now mention the work of Hong and Sire \cite{HS}, where the local well-posedness of solutions to \eqref{equ} in $H^s(\R^N)$ was investigated. Problem \eqref{equ} satisfies the conservation of the mass and the energy given respectively by
$$
M[u]:=\int_{\R^N} |u|^2 \,dx,
$$
$$
E[u]:=\frac 12 \int_{\R^N} |(-\Delta)^{s/2} u|^2 \,dx-\frac{1}{2 \sigma +2} \int_{\R^N} |u|^{2 \sigma+2} \,dx.
$$
For further clarifications, we shall fix some notations. Let us define 
$$
s_c:=\frac N 2 -\frac {s}{\sigma}.
$$
We refer to the cases $s_c<0$, $s_c=0$ and $s_c>0$ as mass subcritical, critical and supercritical, respectively. The end case $s_c=s$ is energy critical. Note that the cases $s_c=0$ and $s_c=s$ correspond to the exponents $\sigma=2s/N$ and $\sigma=2s/(N-2s)$, respectively. For $1 \leq p <\infty$ and $d \geq 1$, we denote by $L^q(\R^{d})$ the usual Lebesgue space with the norm
$$
\|u\|_p:=\left( \int_{\R^d} |u|^p \,dx \right)^{\frac 1p}, \quad 1 \leq p<\infty, \quad \|u\|_{\infty}:= \mathop{\mbox{ess sup}}_{x \in \R^d} |u|.
$$
The Sobolev space $H^s(\R^d)$ is equipped with the standard norm
$$
\|u\|:=\|(-\Delta)^{s/2}u\|_2 +\|u\|_2.
$$
In addition, we denote by $Q \in H^s(\R^N)$ the ground state to the following fractional nonlinear elliptic equation,
$$
(-\Delta)^s Q +Q -|Q|^{2\sigma} Q=0 \quad \text{in} \,\, \R^N.
$$
The uniqueness of ground states was recently revealed in \cite{FL, FLS}. Throughout the paper, we shall write $X \lesssim Y$ to denote that $X \leq C Y$ for some irrelevant constant $C>0$.

The main results of the present paper read as follows and they provide blow-up criteria for solutions of problem \eqref{equ} with cylindrically symmetric data. 

\begin{thm} \label{thm1} \textnormal{(Blowup for Mass-Supercritical Case)}
Let $N \geq 3$, $1/2<s<1$ and $0<s_c \leq s$ with $0<\sigma \leq s$. Suppose that $u_0 \in \Sigma_N$ satisfies that either $E[u_0]<0$ or $E[u_0] \geq 0$ such that
\begin{align} \label{c1}
E[u_0]^{s_c}M[u_0]^{s-s_c}<E[Q]^{s_c}M[Q]^{s-s_c},
\end{align}
\begin{align} \label{c2}
\|(-\Delta)^{s/2} u_0\|_2^2 \|u_0\|_2^{s-s_c}>\|(-\Delta)^{s/2} Q\|_2^2 \|Q\|_2^{s-s_c}.
\end{align}
Then the solution $u \in C([0, T), H^s(\R^N))$ to \eqref{equ} with initial datum $u_0$ blows up in finite time, i.e. $0<T<+\infty$.
\end{thm}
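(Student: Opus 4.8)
The plan is to adapt the localized virial (Glassey-type convexity) method of \cite{BHL} to the nonlocal operator $(-\Delta)^s$ and to the cylindrical symmetry of the data. Since functions in $\Sigma_N$ have finite variance only in the $x_N$-variable, I would use a weight that is exact and quadratic in $x_N$ but truncated in the radial variable $y\in\R^{N-1}$. Fix a smooth radial $\varphi\colon[0,\infty)\to[0,\infty)$ with $\varphi(r)=r^2$ for $r\le1$, $\varphi$ constant for $r\ge2$ and $\varphi''\le2$ everywhere, and set, for $R>0$ and $x=(y,x_N)$,
$$\phi_R(x)=R^2\,\varphi\!\left(\frac{|y|}{R}\right)+x_N^2.$$
The associated localized virial functional $M_{\phi_R}[u]$ (the generator of $\tfrac{d}{dt}\int\phi_R|u|^2$, realized through the Balakrishnan representation of $(-\Delta)^s$ as in \cite{BHL}) is well defined on $\Sigma_N$: the truncation handles the $y$-directions, while $s>1/2$ together with $x_N u\in L^2$ controls the $x_N$-direction, the rigorous justification being a density/approximation argument. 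Interpolating $\dot H^{1/2}$ between $L^2$ and $\dot H^s$ yields the a priori bound
$$|M_{\phi_R}[u]|\le C\big(R,M[u],\|x_N u\|_2\big)\,\|(-\Delta)^{s/2}u\|_2^{1/s},\qquad \tfrac1s\in(1,2),$$
whose exponent $1/s$, strictly below $2$, will be the engine of the final ODE argument.

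The core of the proof is the differential inequality for $t\mapsto M_{\phi_R}[u(t)]$. Following the heat-semigroup/Balakrishnan computation of \cite{BHL}, I would differentiate in time and split the result into the contribution of the exact $x_N^2$-weight, which reproduces the clean virial terms, and that of the truncated $y$-weight, which reproduces clean terms on $\{|y|\le R\}$ plus remainders supported essentially on $\{|y|\ge R\}$. Using $\varphi''\le2$ and conservation of energy, the leading part carries the mass-supercritical sign,
$$\frac{d}{dt}M_{\phi_R}[u(t)]\le c_1\,E[u_0]-c_2\,\|(-\Delta)^{s/2}u(t)\|_2^2+\mathcal{E}_R(t),\qquad c_1,c_2>0,$$
where $c_2>0$ is exactly the supercriticality $N\sigma>2s$. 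The remainder $\mathcal{E}_R(t)$ gathers the nonlocal commutator terms created by truncating $\phi_R$ in $y$ and the nonlinear tail $\int_{|y|\ge R}|u|^{2\sigma+2}$. To control the tail I would use the fractional radial-in-$y$ Strauss decay estimate: as $u(\cdot,x_N)$ is radial in $y\in\R^{N-1}$, one gains a power $|y|^{-(N-2)/2}$, which needs $N-1\ge2$, i.e. $N\ge3$; combined with $\sigma\le s$ this gives
$$|\mathcal{E}_R(t)|\le C\,R^{-\kappa}\big(1+\|(-\Delta)^{s/2}u(t)\|_2^{\theta}\big),\qquad \kappa>0,\ \ \theta<2,$$
so that a Young-inequality absorption dominates $\mathcal{E}_R$ by $-\tfrac{c_2}{2}\|(-\Delta)^{s/2}u\|_2^2$ for $R$ large. \textbf{I expect this nonlocal remainder estimate to be the main obstacle}: unlike the local cylindrical problem of \cite{BF,BFG,DF,F,M}, there is no exact second-order virial identity, and truncating in only part of the variables couples to the nonlocality through commutators that must be handled via the extension, with the radial gain available only in the $y$-directions and the $x_N$-variance bookkeeping kept uniform in time.

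Next I would turn the inequality into a uniformly negative drift. If $E[u_0]<0$ this is immediate: choose $R$ so large that the subquadratic remainder is absorbed into $-\tfrac{c_2}{2}\|(-\Delta)^{s/2}u\|_2^2$ and the residual constant is at most $\tfrac{c_1}{2}|E[u_0]|$, giving $\frac{d}{dt}M_{\phi_R}[u(t)]\le\tfrac{c_1}{2}E[u_0]<0$. If $E[u_0]\ge0$, I would invoke the sharp fractional Gagliardo–Nirenberg inequality, whose optimizer is $Q$: conditions \eqref{c1}–\eqref{c2} are the fractional analogue of the Holmer–Roudenko threshold, and a standard continuity argument based on conservation of mass and energy shows that \eqref{c2} propagates along the flow, i.e. $\|(-\Delta)^{s/2}u(t)\|_2^2\,\|u_0\|_2^{s-s_c}$ remains strictly above its ground-state value on $[0,T)$. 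Because $N\sigma/s>2$, this confines $\|(-\Delta)^{s/2}u(t)\|_2$ to the region where, via Gagliardo–Nirenberg and the fixed value of $E[u_0]$, one has $c_1E[u_0]-c_2\|(-\Delta)^{s/2}u(t)\|_2^2\le-\delta\big(1+\|(-\Delta)^{s/2}u(t)\|_2^2\big)$ for some $\delta>0$. In both cases, fixing $R$ large, I obtain
$$\frac{d}{dt}M_{\phi_R}[u(t)]\le-\delta\big(1+\|(-\Delta)^{s/2}u(t)\|_2^2\big)\le-\delta<0.$$

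Finally I would close with the ODE argument. The negative drift forces $M_{\phi_R}[u(t)]$ to become negative at some finite time and to decrease thereafter; on that interval set $P(t)=-M_{\phi_R}[u(t)]>0$. The a priori bound gives $\|(-\Delta)^{s/2}u(t)\|_2^2\ge\big(P(t)/C\big)^{2s}$, whence
$$P'(t)\ge\delta\,\|(-\Delta)^{s/2}u(t)\|_2^2\ge\delta\,C^{-2s}\,P(t)^{2s}.$$
Since $2s>1$, this super-linear inequality makes $P(t)$, and therefore $\|(-\Delta)^{s/2}u(t)\|_2$, diverge at some finite $t_*$. Were the solution global, its $\dot H^s$ norm would stay finite on $[0,t_*]$, a contradiction; hence $T<+\infty$, which proves the theorem.
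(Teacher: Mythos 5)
Your architecture is the same as the paper's: the cylindrical weight $\varphi_R(x)=\psi_R(|y|)+x_N^2/2$, the Balakrishnan-representation virial computation from \cite[Lemma 2.1]{BHL} leading to the analogue of \eqref{rv0}, the fractional Strauss inequality \eqref{st} in the radial variables $y\in\R^{N-1}$ (whence $N\ge 3$), and the \cite{BHL} threshold/ODE closing for $\sigma>2s/N$, including the propagation of \eqref{c2} by the sharp Gagliardo--Nirenberg inequality. However, the step you yourself flag as ``the main obstacle'' --- the tail estimate for \eqref{non} --- is precisely the paper's only genuinely new ingredient, and your sketch does not contain the idea that makes it work. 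The Strauss gain controls only the factor $\bigl(\int_\R\|u\|^2_{L^\infty(|y|\ge R)}\,dx_N\bigr)^{\sigma}$ after the H\"older splitting \eqref{b112}; the entire difficulty sits in the complementary factor $\int_\R\|u(\cdot,x_N)\|_{L^2_y}^{2/(1-\sigma)}\,dx_N$, which in the local case is handled by \eqref{ineq1}--\eqref{ineq2} and which you propose to treat by unspecified commutator estimates ``via the extension.'' The paper's mechanism is concrete and different: apply the one-dimensional embedding $W^{s,1}(\R)\hookrightarrow L^{1/(1-s)}(\R)$ to the function $x_N\mapsto\|u(\cdot,x_N)\|^2_{L^2_y}$, exchange $(-\partial^2_{x_Nx_N})^{s/2}$ with the $y$-integration, invoke the pointwise identity \eqref{fid}, $(-\partial^2_{x_Nx_N})^{s/2}|u|^2=2|u|(-\partial^2_{x_Nx_N})^{s/2}|u|-I_s(|u|,|u|)$, and interpolate $\|(-\partial^2_{x_Nx_N})^{\sigma/2}|u|\|_{L^2_{x_N}}\lesssim\|(-\partial^2_{x_Nx_N})^{s/2}|u|\|_{L^2_{x_N}}^{\sigma/s}\|u\|_{L^2_{x_N}}^{1-\sigma/s}$, which is exactly where the hypothesis $0<\sigma\le s$ enters (see \eqref{b12} and \eqref{b121}). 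Without this (or a worked-out substitute) your claimed remainder bound $|\mathcal{E}_R(t)|\le CR^{-\kappa}\bigl(1+\|(-\Delta)^{s/2}u(t)\|_2^{\theta}\bigr)$ is unproved and the differential inequality collapses. Note also that at the endpoint $\sigma=s$ the paper gets $\theta=2$, not $\theta<2$ (see \eqref{bs2}); absorption still works thanks to the prefactor $R^{-(N-2)s}$ with $R$ large, so that part of your statement needs only minor repair.

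A secondary soft spot is your closing a priori bound $|M_{\phi_R}[u(t)]|\le C\bigl(R,M[u],\|x_Nu(t)\|_2\bigr)\|(-\Delta)^{s/2}u(t)\|_2^{1/s}$. Since $\nabla\varphi_R$ is unbounded in the $x_N$-direction, the interpolation bound of \cite{BHL} does not apply verbatim to the term $2\,\mathrm{Im}\int \overline{x_Nu}\,\partial_N u\,dx$ (for $s<1$ one cannot simply pair $x_Nu\in L^2$ against $\partial_Nu\notin L^2$), and your constant depends on $\|x_Nu(t)\|_2$, which is not conserved; for the ODE step $P'\gtrsim P^{2s}$ you need this constant uniform in $t$, so the ``$x_N$-variance bookkeeping'' you mention must actually be carried out (e.g.\ via a Gr\"onwall bound on $\frac{d}{dt}\|x_Nu\|_2$, which is sublinear in $\|(-\Delta)^{s/2}u\|_2$ because $2s-1<s$). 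The paper is admittedly terse on this point as well, concluding by ``following the proof of \cite[Theorem 1.1]{BHL},'' but your write-up relies on the unjustified bound explicitly, so you should either prove it or restructure the closing argument.
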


\begin{rem}
When $s_c=s$, by \cite[Appendix B]{BHL}, then $Q \not\in L^2(\R^N)$. In this situation, the conditions \eqref{c1} and \eqref{c2} become the following ones,
$$
E[u_0]<E[Q], \quad \|(-\Delta)^{s/2} u_0\|_2>\|(-\Delta)^{s/2} Q\|_2.
$$

\end{rem}


\begin{thm} \label{thm2} \textnormal{(Blowup for Mass-Critical Case)}
Let $N \geq 3$, $1/2<s<1$ and $s_c=0$ with $0<\sigma \leq s$. Suppose that $u_0 \in \Sigma_N$ satisfies that $E(u_0)<0$. Then the solution $u \in C([0, T), H^s(\R^N))$ to \eqref{equ} either blows up in finite time or blows up in infinite time 
such that
$$
\|(-\Delta)^{s/2} u(t)\|_2 \geq C t^s \quad \text{for any} \,\, t \geq t_0,
$$
where $C>0$ and $t_0=t_0>0$ are constants depending only on $u_0, s$ and $N$.
\end{thm}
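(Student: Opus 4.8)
The plan is to extend the localized virial method of Boulenger, Himmelsbach and Lenzmann \cite{BHL} from the radial to the cylindrical setting dictated by $\Sigma_N$. For $R>0$ I would use a weight of the form $\varphi_R(x)=R^2\,\psi(|y|/R)+x_N^2$, where $\psi$ is the truncated radial profile of \cite{BHL} acting on the variables $y\in\R^{N-1}$, and the exact quadratic weight $x_N^2$ is retained in the $x_N$-direction, which is legitimate thanks to the finite-variance assumption $x_N u\in L^2(\R^N)$ built into $\Sigma_N$. Associated with $\varphi_R$ is the virial functional $\mathcal{M}_{\varphi_R}[u]:=2\,\mathrm{Im}\int_{\R^N}\bar u\,\nabla\varphi_R\cdot\nabla u\,dx$. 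The standing hypothesis $s>1/2$ is what makes this object tractable: interpolating $\dot H^{1/2}$ between $L^2$ and $\dot H^s$ and using conservation of mass yields, on the radially localized part, the a priori bound $|\mathcal{M}_{\varphi_R}[u]|\lesssim R\,\|u\|_{\dot H^{1/2}}^2\lesssim R\,\|(-\Delta)^{s/2}u\|_2^{1/s}$, the exponent $1/s$ being the source of the eventual rate $t^s$.

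Next I would compute $\frac{d}{dt}\mathcal{M}_{\varphi_R}[u(t)]$ by means of the fractional virial identity of \cite{BHL}, i.e. through the Balakrishnan/heat-semigroup representation of $(-\Delta)^s$. Collecting the main terms and invoking the mass-critical relation $s_c=0$, equivalently $N\sigma=2s$, makes the coefficient $-2(N\sigma-2s)\|(-\Delta)^{s/2}u\|_2^2$ vanish, leaving the clean leading behaviour $\frac{d}{dt}\mathcal{M}_{\varphi_R}[u(t)]\le 8s\,E[u_0]+\mathrm{Err}_R(t)$, where $E[u_0]<0$ by hypothesis. The error $\mathrm{Err}_R(t)$ splits into a kinetic localization term of order $R^{-2s}$, stemming from the commutator of $(-\Delta)^s$ with the $y$-cutoff, and a nonlinear localization term supported in $\{|y|>R\}$. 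The latter I would control by the radial Sobolev (Strauss-type) decay estimate in the $y$-variables; this is exactly where the restriction $N\ge 3$ enters, since it guarantees $N-1\ge 2$ so that the $(N-1)$-dimensional radial embedding is available. The additional constraint $0<\sigma\le s$ is used to close the interpolation between $\|(-\Delta)^{s/2}u\|_2$ and the $L^{2\sigma+2}$-mass appearing in this error.

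With these estimates in hand the dichotomy follows. If the maximal existence time is finite, there is nothing to prove. Otherwise the solution is global, and I would fix $R$ large enough (depending only on $E[u_0]$, $M[u_0]$, $s$, $N$) that the time-integrated error stays below half of the main contribution, so that $\mathcal{M}_{\varphi_R}[u(t)]\le \mathcal{M}_{\varphi_R}[u_0]+4s\,E[u_0]\,t\le -c\,t$ for all $t\ge t_0$, with $c>0$. Combining this linear decay with the a priori bound $|\mathcal{M}_{\varphi_R}[u(t)]|\lesssim R\,\|(-\Delta)^{s/2}u(t)\|_2^{1/s}$ gives $\|(-\Delta)^{s/2}u(t)\|_2^{1/s}\gtrsim t$, that is $\|(-\Delta)^{s/2}u(t)\|_2\gtrsim t^s$ for $t\ge t_0$, which is the claimed growth.

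The main obstacle is the treatment of the $x_N$-direction. Unlike the local NLS, where the exact $x_N^2$-virial is handled by an elementary integration by parts, here the nonlocality of $(-\Delta)^s$ couples all coordinates, so the contribution of the unbounded weight $x_N^2$ must be estimated through weighted fractional inequalities, and one must propagate the finiteness of $\|x_N u(t)\|_2$ along the flow by a Gronwall argument while ensuring its growth remains subdominant. Controlling this term---so that the time-integrated error genuinely stays $o(t)$ and does not force the localization radius to grow, which would degrade the exponent below $t^s$---is the delicate point that distinguishes the mass-critical cylindrical case both from the fully radial analysis of \cite{BHL} and from the local cylindrical theory of \cite{BF, BFG, DF, F, Gou}. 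By contrast, in the mass-supercritical case the surviving term $-2(N\sigma-2s)\|(-\Delta)^{s/2}u\|_2^2<0$ can absorb the errors and drive $\mathcal{M}_{\varphi_R}$ to $-\infty$ in finite time; its absence here is precisely why one obtains only the weaker dichotomy with the explicit rate.
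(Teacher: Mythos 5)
Your overall skeleton (cylindrical virial weight $\psi_R(|y|)+x_N^2/2$, the identity of \cite[Lemma 2.1]{BHL}, the vanishing of the kinetic coefficient at $\sigma N=2s$, then $\mathcal{M}_{\varphi_R}[u(t)]\le -ct$ combined with $|\mathcal{M}_{\varphi_R}[u]|\lesssim 1+\|(-\Delta)^{s/2}u\|_2^{1/s}$ to extract the rate $t^s$) matches the paper. But there is a genuine gap at the decisive step: you control the localized nonlinear term by the Strauss estimate in $y$ plus interpolation with $0<\sigma\le s$ --- this is exactly the paper's Lemma \ref{v}, i.e.\ the unrefined bound \eqref{rv0} --- and then claim one can ``fix $R$ large enough that the time-integrated error stays below half of the main contribution.'' That cannot work in the mass-critical case. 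The error produced by this route is of the form $C R^{-\sigma(N-2)}\bigl(1+\|(-\Delta)^{s/2}u(t)\|_2^{2}\bigr)$ (or, before Young, $R^{-\sigma(N-2)}\|(-\Delta)^{s/2}u(t)\|_2^{4/N}$): it depends on the kinetic energy of the solution, which is not a priori bounded --- indeed its unbounded growth is precisely the conclusion of the theorem. In the supercritical case this term is absorbed by the surviving $-2(\sigma N-2s)\|(-\Delta)^{s/2}u\|_2^2$, but at $s_c=0$ that term is identically zero, and no fixed choice of $R$ makes $\int_0^t \|(-\Delta)^{s/2}u(\tau)\|_2^{4/N}d\tau = o(t)$ (already $\|(-\Delta)^{s/2}u(t)\|_2\sim t^s$ gives an integrated error $\sim t^{1+4s/N}\gg t$). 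So your differential inequality does not close.

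What the paper does instead --- and what is missing from your proposal --- is the refined virial estimate \eqref{rv}. One keeps, rather than discards, the nonnegative localized quadratic form $-4\int_0^\infty\!\!\int m^s\,\widetilde{\psi}_{1,R}|\nabla_y u_m|^2\,dx\,dm$ in \eqref{m11}, estimates the localized nonlinearity \emph{with the cutoff weight $\widetilde{\psi}_{2,R}=N-1-\Delta\psi_R$ kept inside the norms} (the weighted versions of \eqref{b111} and \eqref{b121} with $\sigma=2s/N$), applies Young's inequality with a small parameter $\eta$, and uses a \cite[Lemma 2.3]{BHL}-type equivalence to convert $\|(-\Delta)^{s/2}(\widetilde{\psi}_{2,R}^{N/(2N+4s)}u)\|_2^2$ back into $\int_0^\infty\!\!\int m^s\,\widetilde{\psi}_{2,R}^{N/(N+2s)}|\nabla u_m|^2$, which is then absorbed into the $\widetilde{\psi}_{1,R}$ form. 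The remaining errors $\mathcal{O}\bigl(\eta^{-1/(N-1)}R^{-2s(N-2)/(N-1)}+R^{-2s}+\eta(1+R^{-2}+R^{-4})\bigr)$ depend only on $R$ and $\eta$, not on $u(t)$, so choosing $\eta$ small and $R$ large gives $\frac{d}{dt}\mathcal{M}_{\varphi_R}[u(t)]\le 4sE[u_0]<0$ uniformly in $t$, after which the BHL mass-critical endgame applies. One further caveat: your a priori bound $|\mathcal{M}_{\varphi_R}[u]|\lesssim R\|u\|_{\dot H^{1/2}}^2$ is not literally valid here, since $\partial_N\varphi_R=x_N$ is unbounded; the $x_N$-contribution must be estimated separately using $x_Nu\in L^2$ (you correctly flag this as the delicate point, but, like the paper, leave its bookkeeping implicit).
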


\begin{rem}
The assumption that $0<\sigma \leq s$ is technical. It is unknown whether Theorems \ref{thm1} and \ref{thm2} remain hold for $\sigma>s$.
\end{rem}

To prove Theorems \ref{thm1} and \ref{thm2}, the crucial arguments lie in establishing localized virial estimates \eqref{rv0} and \eqref{rv} for cylindrically symmetric solutions to \eqref{equ}, where \eqref{rv} is a refine version of \eqref{rv0} used to discuss blow-up of the solutions to \eqref{equ} for $s_c=0$. First we need to introduce a localized virial quantity $\mathcal{M}_{\varphi_R}[u]$ defined by \eqref{defm}, where $\varphi_R$ defined by \eqref{defv} is a cylindrically symmetric function for $R>0$. Then, adapting \cite[Lemma 2.1]{BHL}, we can derive the virial identity \eqref{m1}. At this stage, to get the desired conclusions, one of the key arguments is actually to estimate the following term,
\begin{align} \label{non}
\int_{\R} \int_{|y| \geq R} |u|^{2 \sigma +2} \,dydx_N.
\end{align}
To do this, when $s=1$, one needs to make use of the following two crucial ingredients jointly with the classical Gagliardo-Nirenberg's inequality in $H^1(\R)$ and the radial Sobolev's inequality in $H^1(\R^{N-1})$,
\begin{align} \label{ineq1}
\sup_{x_N \in \R} \int_{\R^{N-1}} |u|^2 \,dy \lesssim \left(\int_{\R^{N-1}} |\partial_{x_N} u|^2 \,dy\right)^{\frac 12},
\end{align}
\begin{align} \label{ineq2}
\partial_{x_N} \left(\int_{\R^{N-1}} |u|^2 \,dy\right)^{\frac 12} \leq \left(\int_{\R^{N-1}} |\partial_{x_N} u|^2 \,dy\right)^{\frac 12},
\end{align}
see for example \cite{BFG, DF, M}. However, to our knowledge, it seems rather difficult to generalize the estimates \eqref{ineq1} and \eqref{ineq2} to the nonlocal ones. Therefore, we cannot follow the strategies in \cite{BFG, DF, M} to handle the term \eqref{non}. In fact, when $0<s<1$, by tactfully employing Sobolev's inequality in $W^{s, 1}(\R)$ and certain fractional chain rule, we then have that
$$
\int_{\R}\|u\|^{\frac{2}{1-\sigma}}_{L^2_y} \, d x_N \lesssim \left(\int_{\R^{N-1}} \left\|(-\partial_{x_Nx_N})^{{s}/{2}} u \right\|_{L^2_{x_N}}^2 \,dy \right)^{\frac{\sigma}{2s(1-\sigma)}}, \quad 0<\sigma \leq s.
$$
With this at hand, we are now able to estimate the term \eqref{non}. This then completes the proofs.

It would be interesting to investigate blow-up of solutions to \eqref{equ} for cylindrically symmetric initial data belonging in $\Sigma$ without the restriction that $x_N u_0 \in L^2(\R^N)$. In this respect, we have the following result.

\begin{thm} \label{thm11} \textnormal{(Blowup for Mass-Supercritical Case Revisited)}
Let $N \geq 4$, $1/2<s<1$ and $0<s_c \leq s$ with ${2s}/{(N-1)}<\sigma \leq s$. Suppose that $u_0 \in \Sigma$ satisfies that $E[u_0]<0$.
Then the solution $u \in C([0, T), H^s(\R^N))$ to \eqref{equ} with initial datum $u_0$ blows up in finite time, i.e. $0<T<+\infty$.
\end{thm}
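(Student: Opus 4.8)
The overall strategy is a localized virial argument carried out \emph{only} in the transverse variable $y\in\R^{N-1}$, the point being that the strict negativity of the energy replaces the finite-variance hypothesis $x_N u_0\in L^2(\R^N)$ required in Theorem \ref{thm1}. Since the mass and energy are conserved and the flow preserves cylindrical symmetry (the operator $(-\Delta)^s$ and the nonlinearity commute with rotations acting on $y$), the solution satisfies $u(t,\cdot)\in\Sigma$ and $E[u(t)]=E[u_0]<0$ for every $t\in[0,T)$. I would therefore choose the cutoff $\varphi_R$ of \eqref{defv} to depend only on $|y|$, with $\varphi_R(|y|)\approx |y|^2/2$ on $\{|y|\le R\}$ and with $\nabla\varphi_R$ and its higher derivatives uniformly bounded, and form the associated localized virial quantity $\mathcal{M}_{\varphi_R}[u]$ of \eqref{defm}. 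Adapting \cite[Lemma 2.1]{BHL} to this purely transverse weight yields the virial identity \eqref{m1}; collecting terms and inserting the conservation of energy produces the localized virial estimate \eqref{rv0}, schematically of the form
\[
\frac{d}{dt}\mathcal{M}_{\varphi_R}[u(t)]\le c_1\,E[u_0]-c_2\,\|(-\Delta)^{s/2}u(t)\|_2^2+\mathcal{R}_R(t),
\]
with $c_1>0$, with $c_2=(N-1)\sigma-2s>0$ provided by the supercriticality $\sigma>2s/(N-1)$ (the effective dimension of the transverse dilation being $N-1$, which is precisely why the mass-critical threshold is shifted to $\sigma=2s/(N-1)$), and with a remainder $\mathcal{R}_R(t)$ gathering the localization errors together with the boundary nonlinear term \eqref{non}.

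The heart of the matter, and the step I expect to be the \emph{main obstacle}, is controlling the boundary nonlinear term \eqref{non}, namely $\int_{\R}\int_{|y|\ge R}|u|^{2\sigma+2}\,dy\,dx_N$, by a quantity that both decays in $R$ and is strictly subquadratic in $\|(-\Delta)^{s/2}u\|_2$, so that it can be absorbed into the negative kinetic term once $R$ is large. Because the symmetry is only radial in $y$ and no weight is available in $x_N$, the full radial Sobolev inequality on $\R^N$ is unavailable; instead I would factor the estimate through the two groups of variables. In the transverse direction I would invoke the radial (Strauss-type) Sobolev inequality on $\R^{N-1}$ to extract the decay from the region $\{|y|\ge R\}$; in the longitudinal direction I would use the one-dimensional embedding $W^{s,1}(\R)\hookrightarrow L^{1/(1-s)}(\R)$ together with a fractional chain rule, yielding the key inequality announced in the introduction,
\[
\int_{\R}\|u\|^{\frac{2}{1-\sigma}}_{L^2_y}\,dx_N \lesssim \left(\int_{\R^{N-1}}\left\|(-\partial_{x_Nx_N})^{s/2}u\right\|_{L^2_{x_N}}^2\,dy\right)^{\frac{\sigma}{2s(1-\sigma)}},\qquad 0<\sigma\le s,
\]
whose right-hand side is dominated by a power of $\|(-\Delta)^{s/2}u\|_2$ since $\|(-\partial_{x_Nx_N})^{s/2}u\|_{L^2(\R^N)}\le\|(-\Delta)^{s/2}u\|_2$ at the Fourier level. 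Combining the two ingredients by Hölder's inequality in $(y,x_N)$ and interpolating the resulting fractional norms against $\|u\|_2$ and $\|(-\Delta)^{s/2}u\|_2$ should bound \eqref{non} by $C R^{-\theta}\|u_0\|_2^{\alpha}\|(-\Delta)^{s/2}u\|_2^{\beta}$ with $\theta>0$ and, crucially, $\beta<2$. Verifying $\beta<2$ is exactly where the hypotheses bite: the constraint $2s/(N-1)<\sigma\le s$ is nonempty only when $N-1>2$, forcing $N\ge 4$, and the same supercriticality makes the absorbed power strictly subquadratic.

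With \eqref{non} and the localization errors absorbed, \eqref{rv0} collapses to $\frac{d}{dt}\mathcal{M}_{\varphi_R}[u(t)]\le \tfrac12 c_1 E[u_0]<0$ for all $t$ (and, when the kinetic energy is large, to $\frac{d}{dt}\mathcal{M}_{\varphi_R}[u(t)]\le -c\,\|(-\Delta)^{s/2}u(t)\|_2^2$) once $R$ is fixed sufficiently large. I would then conclude by contradiction: assuming the solution is global, integrating the differential inequality drives $\mathcal{M}_{\varphi_R}[u(t)]\to-\infty$, while the a priori bound $|\mathcal{M}_{\varphi_R}[u(t)]|\lesssim_R 1+\|(-\Delta)^{s/2}u(t)\|_2$ (obtained from Cauchy--Schwarz applied to \eqref{defm}) forces $\|(-\Delta)^{s/2}u(t)\|_2$ to grow at least linearly; feeding this growth back into the virial inequality yields a Riccati-type estimate $\frac{d}{dt}\mathcal{M}_{\varphi_R}[u(t)]\lesssim -R^{-2}\mathcal{M}_{\varphi_R}[u(t)]^2$ on the set where $\mathcal{M}_{\varphi_R}[u(t)]<0$, which must blow up in finite time, contradicting globality. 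Hence $T<+\infty$. Note that the mass-critical refinement \eqref{rv} is not needed here, since the strictly negative energy already supplies the decisive sign in \eqref{rv0}.
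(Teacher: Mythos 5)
Your strategy coincides with the paper's: the paper introduces exactly the transverse-only virial quantity you describe, namely $\mathcal{M}_{\psi_R}[u]$ in \eqref{defm1} with the weight $\psi_R$ depending only on $|y|$, proves the estimate \eqref{rv1} of Lemma \ref{v1} by rerunning the proof of Lemma \ref{v} (radial Strauss inequality \eqref{st} in $y$, the embedding $W^{s,1}(\R)\hookrightarrow L^{1/(1-s)}(\R)$ and the fractional chain rule in $x_N$, exactly as you propose), and then concludes from $E[u_0]<0$ and $\sigma(N-1)>2s$ that $\frac{d}{dt}\mathcal{M}_{\psi_R}[u(t)]\le 2\sigma(N-1)E[u_0]<0$ for $R$ large. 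Your identification of why $N\ge 4$ is needed is also the right one.

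There is, however, one step in your endgame that fails as written: the a priori bound $|\mathcal{M}_{\psi_R}[u(t)]|\lesssim_R 1+\|(-\Delta)^{s/2}u(t)\|_2$ ``from Cauchy--Schwarz applied to \eqref{defm}.'' Cauchy--Schwarz on \eqref{defm} produces $\|u\|_2\,\|\nabla u\|_2$, and the full gradient is not controlled for a solution that lives only in $H^s(\R^N)$ with $s<1$; no linear-in-$\|(-\Delta)^{s/2}u\|_2$ bound of this kind holds. The correct substitute, which is what \cite[Theorem 1.1]{BHL} uses and what the paper's ``immediately implies'' silently invokes, is the commutator estimate of \cite[Lemma A.1]{BHL},
\begin{equation*}
|\mathcal{M}_{\psi_R}[u]|\lesssim C(R)\left(\bigl\|(-\Delta)^{1/4}u\bigr\|_2^2+\|u\|_2\,\bigl\|(-\Delta)^{1/4}u\bigr\|_2\right),
\end{equation*}
combined with the interpolation $\|(-\Delta)^{1/4}u\|_2\le\|(-\Delta)^{s/2}u\|_2^{1/(2s)}\|u\|_2^{1-1/(2s)}$ (legitimate precisely because $s>1/2$), giving $|\mathcal{M}_{\psi_R}[u]|\lesssim \|(-\Delta)^{s/2}u\|_2^{1/s}$ up to conserved-mass factors. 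Feeding this into the virial inequality yields $\frac{d}{dt}\mathcal{M}_{\psi_R}[u(t)]\le -c\,|\mathcal{M}_{\psi_R}[u(t)]|^{2s}$ once $\mathcal{M}_{\psi_R}$ is negative and large — a Riccati exponent $2s\in(1,2)$, not $2$, and a kinetic growth rate $t^{s}$, not linear — and since $2s>1$ this ODE still blows up in finite time. So your architecture is sound and matches the paper's; only this quantitative closing step needs the fractional (BHL) version rather than the classical $s=1$ one. A smaller inaccuracy: your insistence that the absorbed power of $\|(-\Delta)^{s/2}u\|_2$ be strictly subquadratic ($\beta<2$) fails at the endpoint $\sigma=s$, where \eqref{bs2} is exactly quadratic; absorption there works instead through the decaying prefactor $R^{-\sigma(N-2)}$, as in \eqref{rv1}, which your ``$R$ large'' clause fortunately already accommodates.
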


To establish Theorem \ref{thm11}, we need to introduce a new localized virial quantity $\mathcal{M}_{\psi_R}[u]$ defined by \eqref{defm1}. Following closely the proof of Lemma \ref{v}, one can get localized virial estimate \eqref{rv1} for cylindrically symmetric solutions to \eqref{equ}. This then implies the desired conclusion.


\section{Proofs of main results}

In this section, we are going to prove Theorems \ref{thm1} and \ref{thm2}. Let us first introduce a localized virial quantity. Let $\psi : \R^{N-1} \to \R$ be a radially symmetric and smooth function such that 
\begin{align*}
\psi(r):=\left\{
\begin{aligned}
&\frac{r^2}{2} &\quad r \leq 1,\\
&const. & \quad r \geq 10,
\end{aligned}
\right.
\qquad \psi''(r) \leq 1, \quad r \geq 0.
\end{align*}
Let $\psi_R : \R^{N-1} \to \R$ be a radially symmetric function defined by
\begin{align} \label{defp}
\psi_R(r):= R^2 \varphi\left(\frac{r}{R}\right), \quad R>0.
\end{align}
It is straightforward to verify that
\begin{align} \label{property}
1-\psi_R''(r) \geq 0, \quad 1-\frac{\psi'_R(r)}{r} \geq 0, \quad N-1-\Delta \psi_R(r) \geq 0, \quad r \geq 0.
\end{align}
Let $\varphi: \R^N=\R^{N-1} \times \R \to \R$ be a smooth function defined by
\begin{align} \label{defv}
\varphi_R(x):=\psi_R(r) + \frac{x_N^2}{2}, \quad  x=(y, x_N) \in \R^{N-1} \times \R, \,\, r=|y|.
\end{align}
Now we introduce the localized virial quantity as
\begin{align} \label{defm}
\mathcal{M}_{\varphi_R}[u]:=2 \textnormal{Im} \int_{\R^N} \overline{u} \left(\nabla \varphi_R \cdot \nabla u \right) \,dx.
\end{align}
It is simple see that $\mathcal{M}_{\varphi_R}[u]$ is well-defined for any $u \in \Sigma_N$. 

For convenience, we shall give the well-known fractional radial Sobolve's inequality in \cite{CO}. For every radial function $f \in H^s(\R^{N-1})$ with $N \geq 3$, then
\begin{align} \label{st}
|y|^{\frac{N-2}{2}} |f(y)| \leq C(N, s) \|(-\Delta)^{s/2} f\|_{2}^{\frac{1}{2s}} \|f\|_2^{1-\frac{1}{2s}}, \quad  y\neq 0.
\end{align}
Also we shall present the well-known Gagliardo-Nirenberge's inequality in \cite{Park}. For any $f \in H^1(\R)$ and $p>2$, then
\begin{align} \label{gn}
\|f\|_{p} \leq C(s, p) \|(-\Delta)^{s/2} f\|_2^{\alpha} \|f\|_2^{1-\alpha}, \quad \alpha=\frac{p-2}{2ps}.
\end{align}
Let $f : \R^{N-1} \to \C$ be a radially symmetric and smooth function, then 
\begin{align}\label{d}
\partial_{kl}^2 f=\left(\delta_{kl}-\frac{x_kx_l}{r^2}\right) \frac{\partial_r f}{r} +\frac{x_kx_l}{r^2} \partial^2_{rr}f.
\end{align} 

In the following, we are going to estimate the evolution of $\mathcal{M}_{\varphi_R}[u(t)]$ along time, which is the key to establish Theorems \ref{thm1} and \ref{thm2}.

\begin{lem} \label{v}
Let $N \geq 3$, $1/2<s<1$ and $0<\sigma \leq s$. Suppose that $u \in C([0, T); H^s(\R^N))$ is the solution to \eqref{equ} with initial datum $u_0 \in \Sigma_N$. Then, for any $t \in [0, T)$, there holds that
\begin{align} \label{rv0}
\begin{split}
\frac{d}{dt}\mathcal{M}_{\varphi_R}[u(t)] &\leq 4s \int_{\R^N}|(-\Delta)^{s/2} u(t)|^2 \,dx -\frac{2 \sigma N}{\sigma+1} \int_{\R^N}  |u(t)|^{2 \sigma +2} \,dx  \\
& \quad + C \left( R^{-2s} +\left(1+R^{-\sigma(N-2)} \int_{\R^N} |(-\Delta )^{{s}/{2}} u(t)|^2\,dx\right) \right)\\
&=4\sigma N E(u_0)-2(\sigma N -2s)\int_{\R^N} |(-\Delta )^{{s}/{2}} u(t)|^2\,dx \\
& \quad + C \left( R^{-2s} +\left(1+R^{-\sigma(N-2)} \int_{\R^N} |(-\Delta )^{{s}/{2}} u(t)|^2\,dx\right) \right).
\end{split}
\end{align}
\end{lem}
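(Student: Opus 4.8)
My plan is to start from the fractional virial identity of Boulenger--Himmelsbach--Lenzmann. Applying \cite[Lemma 2.1]{BHL} to the cylindrical weight $\varphi_R$ and abbreviating $u_m:=(-\Delta+m)^{-1}u$, the Balakrishnan representation of $(-\Delta)^s$ gives the identity \eqref{m1}, of the schematic form
\begin{align*}
\frac{d}{dt}\mathcal{M}_{\varphi_R}[u]=\frac{\sin(\pi s)}{\pi}\int_0^\infty m^{s}\int_{\R^N}\Big(4\,\overline{\nabla u_m}\,(\nabla^2\varphi_R)\,\nabla u_m-(\Delta^2\varphi_R)\,|u_m|^2\Big)\,dx\,dm-\frac{2\sigma}{\sigma+1}\int_{\R^N}(\Delta\varphi_R)\,|u|^{2\sigma+2}\,dx.
\end{align*}
Since $u(t)$ is merely $H^s$, I would first justify this identity and the differentiability of $t\mapsto\mathcal{M}_{\varphi_R}[u(t)]$ by a regularization argument, the finite-variance condition $x_N u_0\in L^2(\R^N)$ (built into $\Sigma_N$) being exactly what makes $\mathcal{M}_{\varphi_R}[u]$ well defined.

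Next I would insert the derivatives of $\varphi_R(x)=\psi_R(|y|)+x_N^2/2$. By \eqref{d}, its Hessian is block-diagonal, equal to $1$ in the $x_N$-direction and to $\nabla^2\psi_R$ in the $y$-variables, while $\Delta\varphi_R=\Delta_y\psi_R+1$ and $\Delta^2\varphi_R=\Delta_y^2\psi_R$. I would then add and subtract the exact weight $|x|^2/2$, whose Hessian is the identity and whose bi-Laplacian vanishes. The resulting leading part reproduces, after inserting the normalizing constant, the exact fractional virial identity $4s\|(-\Delta)^{s/2}u\|_2^2-\frac{2\sigma N}{\sigma+1}\|u\|_{2\sigma+2}^{2\sigma+2}$; invoking conservation of energy $E[u(t)]=E[u_0]$ rewrites it as $4\sigma N\,E[u_0]-2(\sigma N-2s)\|(-\Delta)^{s/2}u\|_2^2$, the second line of \eqref{rv0}.

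All remaining deviations are localized in $\{|y|\geq R\}$. By \eqref{property}, the matrix $I-\nabla^2\psi_R$ and the scalar $N-1-\Delta_y\psi_R$ are nonnegative, while $\Delta_y^2\psi_R=O(R^{-2})$ is supported in $\{R\leq|y|\leq 10R\}$. Following \cite{BHL}, I would bound the contribution of the Hessian deviation and of $\Delta_y^2\psi_R$ to the $m$-integral by splitting $\int_0^\infty dm$ at a suitable threshold and using the $L^2$-smoothing and decay of $u_m$, which produces the term $CR^{-2s}$; note that no error arises from the $x_N$-direction, which is handled by the exact quadratic.

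The remaining, and genuinely new, difficulty is the nonlinear deviation $\frac{2\sigma}{\sigma+1}\int_\R\int_{|y|\geq R}(N-1-\Delta_y\psi_R)|u|^{2\sigma+2}\,dy\,dx_N$, controlled by \eqref{non}. For fixed $x_N$ I would use the radial Sobolev inequality \eqref{st} in $\R^{N-1}$ (this is where $N\geq 3$ enters) to gain the factor $R^{-\sigma(N-2)}$:
\begin{align*}
\int_{|y|\geq R}|u|^{2\sigma+2}\,dy\lesssim R^{-\sigma(N-2)}\,\|(-\Delta_y)^{s/2}u\|_{L^2_y}^{\sigma/s}\,\|u\|_{L^2_y}^{\,2+2\sigma-\sigma/s}.
\end{align*}
Writing $h(x_N):=\|u(\cdot,x_N)\|_{L^2_y}$, the crucial one-dimensional estimate
\begin{align*}
\int_\R h^{\frac{2}{1-\sigma}}\,dx_N\lesssim\left(\int_{\R^{N-1}}\|(-\partial_{x_Nx_N})^{s/2}u\|_{L^2_{x_N}}^2\,dy\right)^{\frac{\sigma}{2s(1-\sigma)}}
\end{align*}
I would prove by applying the fractional Gagliardo--Nirenberg inequality \eqref{gn} to $h$ with $p=2/(1-\sigma)$ (hence $\alpha=\sigma/2s$), together with the chain-rule bound $[h]_{\dot H^s(\R)}^2\leq\int_{\R^{N-1}}[u(y,\cdot)]_{\dot H^s(\R)}^2\,dy$ --- which follows from the reverse triangle inequality $|h(x_N)-h(x_N')|\leq\|u(\cdot,x_N)-u(\cdot,x_N')\|_{L^2_y}$ and Fubini applied to the Gagliardo double integral --- and the mass identity $\|h\|_{L^2(\R)}^2=M[u_0]$. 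Combining this with Hölder and interpolation in $x_N$, and with $\|(-\Delta_y)^{s/2}u\|_2^2,\|(-\partial_{x_Nx_N})^{s/2}u\|_2^2\leq\|(-\Delta)^{s/2}u\|_2^2$, bounds \eqref{non} by $CR^{-\sigma(N-2)}\|(-\Delta)^{s/2}u\|_2^{2\sigma/s}$ times a constant depending only on the conserved mass. Here the hypothesis $\sigma\leq s$ is decisive: it forces $2\sigma/s\leq 2$, so the kinetic energy enters to a power at most one and a final Young inequality delivers the claimed remainder $C(1+R^{-\sigma(N-2)}\|(-\Delta)^{s/2}u\|_2^2)$. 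I expect this transfer of $x_N$-fractional smoothness through the nonlocal profile $h$ to be the main obstacle, since the pointwise inequalities \eqref{ineq1}--\eqref{ineq2} available in the local case have no fractional analogue and must be replaced by the Gagliardo/Minkowski argument above.
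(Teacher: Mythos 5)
Your proposal is correct, and it matches the paper's proof in its overall architecture: the virial identity \eqref{m1} from \cite[Lemma 2.1]{BHL} with the Balakrishnan representation, the block-diagonal Hessian computation for $\varphi_R=\psi_R(|y|)+x_N^2/2$, the bi-Laplacian error $O(R^{-2s})$ via \cite[Lemma A.2]{BHL}, the H\"older splitting of the tail term \eqref{non} into an $L^\infty$-factor and an $L^{2/(1-\sigma)}$-factor of $h(x_N):=\|u(\cdot,x_N)\|_{L^2_y}$, the radial Sobolev bound \eqref{b111} via \eqref{st}, and the final Young step exploiting $2\sigma/s\le 2$. Where you genuinely diverge is at the central one-dimensional estimate $\int_\R h^{2/(1-\sigma)}\,dx_N\lesssim\bigl(\int_{\R^{N-1}}\|(-\partial_{x_Nx_N})^{s/2}u\|_{L^2_{x_N}}^2\,dy\bigr)^{\frac{\sigma}{2s(1-\sigma)}}$: the paper proves this (estimates \eqref{b12} and \eqref{b121}) via the pointwise fractional product rule \eqref{fid} applied to $|u|^2$, the embedding $W^{s,1}(\R)\hookrightarrow L^{1/(1-s)}(\R)$, and $L^2$-interpolation, whereas you apply the fractional Gagliardo--Nirenberg inequality \eqref{gn} directly to $h$ with $p=2/(1-\sigma)$, $\alpha=\sigma/(2s)$ (so $\alpha p=\sigma/(s(1-\sigma))$, and the exponents match exactly), transferring the $x_N$-regularity through $[h]_{\dot H^s(\R)}^2\le\int_{\R^{N-1}}[u(y,\cdot)]_{\dot H^s(\R)}^2\,dy$, which is immediate from the reverse triangle inequality in $L^2_y$ plus Fubini in the Gagliardo double integral. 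This is a legitimate and arguably cleaner route: it avoids the product-rule remainder $I_s$ and the $W^{s,1}$ embedding entirely, and your Minkowski-type seminorm inequality is precisely the fractional analogue of \eqref{ineq2} whose apparent unavailability in pointwise form is what motivated the paper's detour. Two small points of bookkeeping: the Hessian deviation $I-\nabla^2\psi_R$ is $O(1)$ on $\{|y|\gtrsim R\}$, not $O(R^{-2s})$, so it cannot be absorbed into the error by splitting the $m$-integral --- it must simply be dropped using the sign condition \eqref{property} (only $\Delta_y^2\psi_R$ is estimated that way), which your citation of \eqref{property} suggests you intend; and since you apply \eqref{st} slice-wise before integrating in $x_N$, the subsequent H\"older step produces the power $h^{(2+2\sigma-\sigma/s)\cdot\frac{2s}{2s-\sigma}}$ with exponent strictly between $2$ and $2/(1-\sigma)$, so the extra $L^p$-interpolation of $h$ you allude to is indeed needed --- the paper sidesteps this by H\"oldering $\int_\R\|u\|_{L^\infty(|y|\ge R)}^{2\sigma}\|u\|_{L^2_y}^2\,dx_N$ first and only then invoking \eqref{st}; the resulting homogeneity $R^{-\sigma(N-2)}\|(-\Delta)^{s/2}u\|_2^{2\sigma/s}$ agrees with the paper's \eqref{s2} either way.
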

\begin{proof}
Define 
$$
u_m(t, x):=c_s \frac{1}{-\Delta +m} u(t, x)=c_s \mathcal{F}^{-1} \left(\frac{\mathcal{F}(u)(t, \xi)}{|\xi|^2+m}\right), \quad m>0,
$$
where
$$
c_s:=\sqrt{\frac{\sin \pi s}{\pi}}.
$$
It follows from \cite[Lemma 2.1]{BHL} that
\begin{align} \label{m1}
\begin{split} 
\frac{d}{dt} \mathcal{M}_{\varphi_R}[u(t)]=&\int_0^{\infty} \int_{\R^N} m^s \left(4\sum_{k, l=1}^N \overline{\partial_k u_m} \left(\partial_{kl}^2 \varphi_R\right) \partial_l u_m-\left(\Delta^2 \varphi_R\right) |u_m|^2 \right) \,dxdm \\
& -\frac{2\sigma}{\sigma+1} \int_{\R^N} \left(\Delta \varphi_R \right) |u|^{2\sigma+2} \,dx.
\end{split}
\end{align}
Let us start with treating the first term in the right hand side of \eqref{m1}. Observe that
\begin{align} \label{m111} \nonumber
4\sum_{k, l=1}^N \int_0^{\infty} \int_{\R^N} m^s\overline{\partial_k u_m}\left(\partial_{kl}^2 \varphi_R\right) \partial_l u_m \,dxdm 
&=4\sum_{k, l=1}^{N-1} \int_0^{\infty} \int_{\R^N} m^s\overline{\partial_k u_m}   \left(\partial_{kl}^2 \varphi_R\right) \partial_l u_m \,dxdm  \\ \nonumber
&\quad  +4\sum_{k=1}^{N-1} \int_0^{\infty} \int_{\R^N} m^s\overline{\partial_k u_m}   \left(\partial_{kN}^2 \varphi_R\right) \partial_N u_m \,dxdm \\ \nonumber
& \quad + 4 \sum_{l=1}^{N-1} \int_0^{\infty} \int_{\R^N} m^s\overline{\partial_{N} u_m}   \left(\partial_{Nl}^2 \varphi_R\right) \partial_l u_m \,dxdm \\
& \quad +4 \int_0^{\infty} \int_{\R^N} m^s\overline{\partial_N u_m} \left(\partial_{NN}^2 \varphi_R\right) \partial_N u_m \,dxdm.
\end{align}
Using \eqref{defv} and \eqref{d}, we have that
\begin{align*}
\sum_{k, l=1}^{N-1} \int_0^{\infty} \int_{\R^N} m^s\overline{\partial_k u_m} \left(\partial_{kl}^2 \varphi_R\right) \partial_l u_m \,dxdm &= \sum_{k, l=1}^{N-1} \int_0^{\infty} \int_{\R^N} m^s\overline{\partial_k u_m} \left(\partial_{kl}^2 \psi_R\right) \partial_l u_m \,dxdm\\
&=\int_0^{\infty} \int_{\R^N} m^s \left(\psi_{rr}^2 \psi_R \right) |\nabla_y u_m|^2\,dxdm.
\end{align*}
It is clear to see from \eqref{defv} that $\partial_{jN}^2 \varphi_R=0$ for $1 \leq j \leq N-1$. Therefore, there holds that
$$
\sum_{k=1}^{N-1} \int_0^{\infty} \int_{\R^N} m^s\overline{\partial_k u_m}   \left(\partial_{kN}^2 \varphi_R\right) \partial_N u_m \,dxdm=\sum_{l=1}^{N-1} \int_0^{\infty} \int_{\R^N} m^s\overline{\partial_{N} u_m}   \left(\partial_{Nl}^2 \varphi_R\right) \partial_l u_m \,dxdm=0.
$$
In addition, since $\partial_{NN}^2 \varphi_R=1$, then
\begin{align*}
\int_0^{\infty} \int_{\R^N} m^s\overline{\partial_N u_m} \left(\partial_{NN}^2 \varphi_R\right) \partial_N u_m \,dxdm=\int_0^{\infty}\int_{\R^N} m^s |\partial_{N}u_m|^2 \,dx.
\end{align*}
As an application of Plancherel's identity and Fubini's theorem, we know that
\begin{align} \label{f1}
\begin{split}
\int_0^{\infty} \int_{\R^N} m^s |\nabla u_m|^2\,dxdm&=\frac{\sin \pi s}{\pi}\int_{\R^N} \int_0^{\infty} \frac{m^s}{(|\xi|^2+m)^2} |\xi|^2 |\mathcal{F}(u)|^2 \,dm d\xi\\
&=s\int_{\R^N} |\xi|^{2s}   |\mathcal{F}(u)|^2  \,d\xi=s\int_{\R^N}|(-\Delta)^{s/2} u|^2 \,dx.
\end{split}
\end{align}
Consequently, going back to \eqref{m111} and utilizing \eqref{property}, we get that
\begin{align} \label{v1}
\begin{split}
4\sum_{k, l=1}^N \int_0^{\infty} \int_{\R^N} m^s\overline{\partial_k u_m}   \left(\partial_{kl}^2 \varphi_R\right) \partial_l u_m \,dxdm&=4\int_0^{\infty} \int_{\R^N} m^s |\nabla u_m|^2\,dxdm \\
& \quad -4\int_0^{\infty} \int_{\R^N} m^s\left(1-\partial_{rr}^2 \psi_R \right) |\nabla_y u_m|^2\,dxdm \\
& \leq 4s \int_{\R^N}|(-\Delta)^{s/2} u|^2 \,dx.
\end{split}
\end{align}
Furthermore, applying \cite[Lemma A. 2]{BHL}, we get that
\begin{align} \label{v2}
\int_0^{\infty} \int_{\R^N} m^s\left(\Delta^2 \varphi_R\right) |u_m|^2 \,dxdm \lesssim \|\Delta^2 \varphi_R\|_{\infty}^s \|\Delta \varphi_R\|_{\infty}^{1-s} \|u\|_2^2 \lesssim R^{-2s}.
\end{align}
Now we are going to deal with the second term in the right hand side of \eqref{m1}. Noting \eqref{defv}, one readily finds that
\begin{align} \label{n11}
\begin{split} 
\int_{\R^N}\left(\Delta \varphi_R\right) |u|^{2 \sigma +2} \,dx &=\int_{\R^N}  \left(\Delta \psi_R \right) |u|^{2 \sigma +2} \,dx+\int_{\R^N} |u|^{2 \sigma +2} \,dx\\
&=N \int_{\R^N}  |u|^{2 \sigma +2} \,dx +\int_{\R^N}\left(\Delta \psi_R -N+1\right) |u|^{2 \sigma +2} \,dx.
\end{split}
\end{align}
Obviously, from \eqref{defp}, there holds that $\Delta \psi_R(r) -N+1=0$ for $0 \leq r \leq R$. Therefore, by \eqref{n11}, we conclude that
\begin{align} \label{b1} 
\int_{\R^N}\left(\Delta \varphi_R\right) |u|^{2 \sigma +2} \,dx=N\int_{\R^N}  |u|^{2 \sigma +2} \,dx +\int_{\R}\int_{|y| \geq R}  \left(\Delta \psi_R -N+1\right) |u|^{2 \sigma +2} \,dydx_N.
\end{align}
In what follows, the aim is to estimate the second term in the right hand side of \eqref{b1}. It is simple to notice that
\begin{align} \label{b11}
\int_{\R} \int_{|y| \geq R} |u|^{2 \sigma +2} \,dydx_N  \leq \int_{\R} \|u\|^{2\sigma}_{L^{\infty}(|y| \geq R)} \|u\|_{L^2_y}^{2} \,dx_N.
\end{align}
To estimate the term in the right hand side of \eqref{b11}, we first consider the case that $\sigma=s$. In this case, applying H\"older's inequality, we get that
\begin{align} \label{b1122}
\int_{\R}\int_{|y| \geq R} |u|^{2 s +2} \,dx \leq \left(\int_{\R} \|u\|_{L^{\infty}(|y| \geq R)}^2\, d x_N \right)^{s} \left(\int_{\R} \|u\|^{\frac{2}{1-s}}_{L^2_y} \, d x_N \right)^{1-s}.
\end{align}
In virtue of \eqref{st}, H\"older's inequality and the conservation of mass, we have that
\begin{align} \label{b111}
\begin{split}
\int_{\R}\|u\|^{2}_{L^{\infty}(|y| \geq R)} \, dx_N &\lesssim R^{-(N-2)}\int_{\R} \| (-\Delta_y)^{s/2} u\|_{L^2_y}^{\frac {1}{s}} \| u\|_{L^2_y}^{2-\frac {1}{s}}\,dx_N \\
& \leq R^{-(N-2)} \left(\int_{\R} \| (-\Delta_y)^{s/2} u\|_{L^2_y}^2 \,dx_N \right)^{\frac{1}{2s}} \left(\int_{\R}\| u\|_{L^2_y}^2\,dx_N\right)^{\frac{2s-1}{2s}} \\
& \lesssim R^{-(N-2)} \left(\int_{\R} \| (-\Delta_y)^{s/2} u\|_{L^2_y}^2 \,dx_N \right)^{\frac{1}{2s}}.
\end{split}
\end{align}
Note that $(-\Delta)^{\frac {s}{2}}$ can be equivalently represented as
\begin{align} \label{deff}
(-\Delta)^{\frac {s}{2}} u(x):= C_{N,s} P.V. \int_{\R^N} \frac{u(x)-u(z)}{|x-z|^{N+s}} \,dz,
\end{align}
where $C_{N, s} \in \R$ is a constant given by
$$
C_{N, s}= \left(\int_{\R^N} \frac{1-\cos \xi_1}{|\xi|^{N+s}} \,d\xi\right)^{-1}, \quad \xi=(\xi_1, \xi_2, \cdots, \xi_N).
$$
By \eqref{deff}, we are able to calculate that
\begin{align} \label{fid}
(-\partial_{x_Nx_N}^2)^{\frac {s}{2}} |u|^2=2 |u| (-\partial_{x_Nx_N}^2)^{\frac {s}{2}}|u| -I_s(|u|, |u|),
\end{align}
where
$$
I_{s}(|u|, |u|):=C_{1, s} \int_{\R} \frac{\left(|u|(y, x_N)-|u|(y, x_N')\right)^2}{|x_N-x_N'|^{1+s}} \,dx_N'.
$$
Taking into account \eqref{fid}, Sobolev's inequality, Fubini's inequality, H\"older's inequality and the conservation of mass, we then obtain that
\begin{align} \label{b12} \nonumber
\int_{\R}\|u\|^{\frac{2}{1-s}}_{L^2_y} \, d x_d  & \lesssim  \left(\int_{\R} \left|(-\partial_{x_Nx_N})^{{s}/{2}} \left(\|u\|_{L^2_y}^2\right)\right| \, dx_N\right)^{\frac{1}{1-s}} = \left(\int_{\R} \left| \int_{\R^{N-1}}(-\partial_{x_Nx_N})^{{s}/{2}} \left(|u|^2\right) \, dy\right| \, dx_N\right)^{\frac{1}{1-s}} \\ \nonumber
&\leq \left(\int_{\R^{N-1}} \int_{\R}\left|(-\partial_{x_Nx_N})^{{s}/{2}}\left(|u|^2\right)\right|  \,dx_N dy \right)^{\frac{1}{1-s}}\\ \nonumber
& \lesssim \left(\int_{\R^{N-1}}  \left\|u\right\|_{L^2_{x_N}} \left\|(-\partial_{x_Nx_N})^{{s}/{2}} |u|\right\|_{L^2_{x_N}} \,dy \right)^{\frac{1}{1-s}} + \left(\int_{\R^{N-1}}\left\|(-\partial_{x_Nx_N})^{{s}/{4}} |u| \right\|_{L^2_{x_N}}^2 \,dy \right)^{\frac{1}{1-s}} \\  \nonumber
& \lesssim \left(\int_{\R^{N-1}} \left\|(-\partial_{x_Nx_N})^{{s}/{2}} |u|\right\|_{L^2_{x_N}}^2 \,dy \right)^{\frac{1}{2(1-s)}}  + \left(\int_{\R^{N-1}}\left\|(-\partial_{x_Nx_N})^{{s}/{2}} |u| \right\|_{L^2_{x_N}}\left\|u\right\|_{L^2_{x_N}} \,dy \right)^{\frac{1}{1-s}} \\ 
& \lesssim \left(\int_{\R^{N-1}} \left\|(-\partial_{x_Nx_N})^{{s}/{2}} |u|\right\|_{L^2_{x_N}}^2 \,dy \right)^{\frac{1}{2(1-s)}},
\end{align}
where Sobolev's inequality we used is from the fact that $L^{\frac{1}{1-s}}(\R)$ is continuously embedded into $W^{s,1}(\R)$.
Now going back to \eqref{b1122} and applying \eqref{b111} and \eqref{b12}, we then derive that
\begin{align} \label{bs2}
\int_{\R}\int_{|y| \geq R} |u|^{2 s +2} \,dx \lesssim R^{-(N-2)s}\int_{\R^N} |(-\Delta )^{{s}/{2}} u|^2\,dx.
\end{align}
Next we consider the case that $0<\sigma<s$. In this case, taking into account \eqref{b11} and H\"older's inequality, we know that
\begin{align} \label{b112}
\begin{split}
\int_{\R}\int_{|y| \geq R}  |u|^{2 \sigma +2} \,dx \leq \left(\int_{\R} \|u\|_{L^{\infty}(|y| \geq R)}^{2}\, d x_N \right)^{\sigma} \left(\int_{\R}\|u\|^{\frac{2}{1-\sigma}}_{L^2_y} \, d x_N \right)^{1-\sigma}.
\end{split}
\end{align}
In view of \eqref{fid}, Sobolev's inequality, H\"older's inequality and the conservation of mass, we are able to similarly show that
\begin{align} \label{b121}
\begin{split}
\int_{\R}\|u\|^{\frac{2}{1-\sigma}}_{L^2_y} \, d x_N  & \leq  \left(\int_{\R} \left|(-\partial_{x_Nx_N})^{{\sigma}/{2}} \left(\|u\|_{L^2_y}^2\right)\right| \, dx_N\right)^{\frac{1}{1-\sigma}}\\
&\lesssim \left(\int_{\R^{N-1}} \left\|(-\partial_{x_Nx_N})^{{\sigma}/{2}} |u|\right\|_{L^2_{x_N}}^2 \,dy \right)^{\frac{1}{2(1-\sigma)}} \\
& \lesssim \left(\int_{\R^{N-1}} \left\|(-\partial_{x_Nx_N})^{{s}/{2}} |u|\right\|_{L^2_{x_N}}^{\frac{2\sigma}{s}} \left\|u\right\|_{L^2_{x_N}}^{\frac{2(s-\sigma)}{s}} \,dy \right)^{\frac{1}{2(1-\sigma)}} \\
& \lesssim \left(\int_{\R^{N-1}} \left\|(-\partial_{x_Nx_N})^{{s}/{2}} |u|\right\|_{L^2_{x_N}}^2 \,dy \right)^{\frac{\sigma}{2s(1-\sigma)}}.
\end{split}
\end{align}
Making use of \eqref{b111} and \eqref{b121}, we then obtain from \eqref{b112} that
\begin{align} \label{s2}
\begin{split}
\int_{\R}\int_{|y| \geq R}  |u|^{2 \sigma +2} \,dx & \lesssim R^{-\sigma(N-2)} \left( \int_{\R^N} |(-\Delta )^{{s}/{2}} u|^2\,dx \right)^{\frac{\sigma}{s}} \\
&\lesssim  R^{-\sigma(N-2)} + R^{-\sigma(N-2)} \int_{\R^N} |(-\Delta )^{{s}/{2}} u|^2\,dx.
\end{split}
\end{align}
Combining \eqref{bs2} and \eqref{s2}, we then have that
\begin{align*}
\int_{\R}\int_{|y| \geq R}  |u|^{2 \sigma +2} \,dx \lesssim  R^{-\sigma(N-2)} + R^{-\sigma(N-2)} \int_{\R^N} |(-\Delta )^{{s}/{2}} u|^2\,dx, \quad 0<\sigma \leq s.
\end{align*}
It then follows from \eqref{b1} that
\begin{align*}
\int_{\R^N}\left(\Delta \varphi_R\right) |u|^{2 \sigma +2} \,dx \leq N\int_{\R^N}  |u|^{2 \sigma +2} \,dx + C R^{-\sigma(N-2)} \left(1+\int_{\R^N} |(-\Delta )^{{s}/{2}} u|^2\,dx\right), \quad 0<\sigma \leq s.
\end{align*}
This together with \eqref{v1} and \eqref{v2} then clearly leads the desired conclusion and the proof is completed.
\end{proof}

\begin{proof} [Proof of Theorem \ref{thm1}]
Using Lemma \ref{v} and following the proof of \cite[Theorem 1.1]{BHL} for the case $\sigma>{2s}/{N}$, we are able to conclude the proof.
\end{proof}

\begin{proof}[Proof of Theorem \ref{thm2}]
To prove Theorem \ref{thm2}, we need a refined version of Lemma \ref{v}. Define
$$
\widetilde{\psi}_{1,R}(r):=1-\partial_{rr}^2 \psi_R, \quad  \widetilde{\psi}_{2,R}(r):=N-1-\Delta \psi_R \geq 0.
$$
Taking advantage of \eqref{f1}, \eqref{v1}, \eqref{v2} and \eqref{b1}, we know from \eqref{m1} that
\begin{align} \label{m11}
\begin{split}
\frac{d}{dt} \mathcal{M}_{\varphi_R}[u(t)]=&8s E(u_0)-4\int_0^{\infty} \int_{\R^N} m^s \widetilde{\psi}_{1,R} |\nabla_y u_m|^2\,dxdm \\
& \quad + \frac{4s}{N+2s} \int_{\R}\int_{|y| \geq R}  \widetilde{\psi}_{2,R} |u|^{\frac{4s}{N} +2} \,dydx_N + \mathcal{O}(R^{-2s}).
\end{split}
\end{align}
According to H\"older's inequality, we see that
\begin{align*}
\int_{\R}\int_{|y| \geq R}  \widetilde{\psi}_{2,R} |u|^{\frac{4s}{N} +2} \,dydx_N &\leq \int_{\R} \left\| \widetilde{\psi}_{2,R}^{\frac{N}{2N+4s}}|u|\right\|^{\frac{4s}{N}}_{L^{\infty}(|y| \geq R)} \left\|\widetilde{\psi}_{2,R}^{\frac{N}{2N+4s}}|u|\right\|_{L^2_y}^{2} \,dx_N \\
& \leq \left(\int_{\R} \left\|\widetilde{\psi}_{2,R}^{\frac{N}{2N+4s}}|u|\right\|^2_{L^{\infty}(|y| \geq R)} \,dx_N\right)^{\frac{2s}{N}} \left(\int_{\R}\left\|\widetilde{\psi}_{2,R}^{\frac{N}{2N+4s}}|u|\right\|_{L^2_y}^{\frac{2N}{N-2s}} \,dx_N\right)^{\frac{N-2s}{N}}.
\end{align*}
It follows from \eqref{b111} and \eqref{b121} with $\sigma=2s/N$ that
\begin{align*}
\int_{\R} \left\|\widetilde{\psi}_{2,R}^{\frac{N}{2N+4s}}|u|\right\|^2_{L^{\infty}(|y| \geq R)} \,dx_N \lesssim R^{-(N-2)} \left(\int_{\R} \left\| (-\Delta_y)^{s/2} \left(\widetilde{\psi}_{2,R}^{\frac{N}{2N+4s}}|u|\right)\right\|_{L^2_y}^2 \,dx_N \right)^{\frac{1}{2s}},
\end{align*}
\begin{align*}
\int_{\R}\left\|\widetilde{\psi}_{2,R}^{\frac{N}{2N+4s}}|u|\right\|_{L^2_y}^{\frac{2N}{N-2s}} \,dx_N \lesssim \left(\int_{\R^{N-1}} \left\|(-\partial_{x_Nx_N})^{{s}/{2}} \left(\widetilde{\psi}_{2,R}^{\frac{N}{2N+4s}}|u|\right)\right\|_{L^2_{x_N}}^2 \,dy \right)^{\frac{1}{N-2s}}.
\end{align*}
Consequently, we have that
\begin{align} \label{mcb}
\begin{split}
\int_{\R}\int_{|y| \geq R} \widetilde{\psi}_{2,R} |u|^{\frac{4s}{N} +2} \,dydx_N &\lesssim R^{-\frac{2s(N-2)}{N}}\left(\int_{\R^N} \left|(-\Delta )^{{s}/{2}}\left(\widetilde{\psi}_{2,R}^{\frac{N}{2N+4s}}u\right)\right|^2\,dx\right)^{\frac 1N} \\
& \leq \eta \int_{\R^N} \left|(-\Delta )^{{s}/{2}}\left(\widetilde{\psi}_{2,R}^{\frac{N}{2N+4s}}u\right)\right|^2\,dx+ \mathcal{O} \left(\eta^{-\frac{1}{N-1}} R^{-\frac{2s(N-2)}{N-1}}\right),
\end{split}
\end{align}
where we also used Young's inequality with $\eta>0$. Moreover, adapting the elements presented in the proof of \cite[Lemma 2.3]{BHL}, we are able to derive that
\begin{align*}
s\int_{\R^N} \left|(-\Delta )^{{s}/{2}}\left(\widetilde{\psi}_{2,R}^{\frac{N}{2N+4s}}u\right)\right|^2\,dx=\int_0^{\infty} \int_{\R^N} m^s \widetilde{\psi}_{2,R}^{\frac{N}{N+2s}}|\nabla u_m|^2 \,dxdm+ \mathcal{O} \left(1+R^{-2}+R^{-4}\right).
\end{align*}
It then yields from \eqref{mcb} that
\begin{align*}
\int_{\R}\int_{|y| \geq R} \widetilde{\psi}_{2,R} |u|^{\frac{4s}{N} +2} \,dydx_N & \leq \frac{\eta}{s} \int_0^{\infty} \int_{\R^N} m^s\widetilde{\psi}_{2,R}^{\frac{N}{N+2s}}|\nabla u_m|^2 \,dxdm \\
& \quad +\mathcal{O} \left(\eta^{-\frac{1}{N-1}} R^{-\frac{2s(N-2)}{N-1}}+\eta\left(1+R^{-2}+R^{-4}\right)\right).
\end{align*}
Inserting into \eqref{m11}, we then conclude that
\begin{align} \label{rv}
\begin{split}
\frac{d}{dt} \mathcal{M}_{\varphi_R}[u(t)] &=8s E(u_0)-4\int_0^{\infty} \int_{\R^N} m^s \left(\widetilde{\psi}_{1,R}- \frac{\eta}{N+2s}\widetilde{\psi}_{2,R}^{\frac{N}{N+2s}}\right) \widetilde{\varphi}_R |\nabla_y u_m|^2\,dxdm \\
& \quad +\mathcal{O} \left(\eta^{-\frac{1}{N-1}} R^{-\frac{2s(N-2)}{N-1}}+R^{-2s}+\eta\left(1+R^{-2}+R^{-4}\right)\right).
\end{split}
\end{align}
At this point, using the refined version of Lemma \ref{v} given by \eqref{rv} and following the proof of \cite[Theorem 1.1]{BHL} for the case $\sigma={2s}/{N}$, we are able to conclude the proof. This completes the proof.
\end{proof}

To discuss blow-up of solutions to \eqref{equ} with initial data belonging to $\Sigma$. We shall introduce a new localized virial quantity. Let $\psi_R$ be defined by \eqref{defp}. The localized virial quantity is indeed defined by
\begin{align} \label{defm1}
\mathcal{M}_{\psi_R}[u]:=2 \textnormal{Im} \int_{\R^N} \overline{u} \left(\nabla \psi_R \cdot \nabla_y u \right) \,dx.
\end{align}

\begin{lem} \label{v1}
Let $N \geq 3$, $1/2<s<1$ and $0<\sigma \leq s$. Suppose that $u \in C([0, T); H^s(\R^N))$ is the solution to \eqref{equ} with initial datum $u_0 \in \Sigma$. Then, for any $t \in [0, T)$, there holds that
\begin{align} \label{rv1}
\begin{split}
\frac{d}{dt}\mathcal{M}_{\psi_R}[u(t)] &\leq 4s \int_{\R^N}|(-\Delta)^{s/2} u(t)|^2 \,dx -\frac{2 \sigma (N-1)}{\sigma+1} \int_{\R^N}  |u(t)|^{2 \sigma +2} \,dx  \\
& \quad + C \left( R^{-2s} +\left(1+R^{-\sigma(N-2)} \int_{\R^N} |(-\Delta )^{{s}/{2}} u(t)|^2\,dx\right) \right)\\
&=4\sigma (N-1) E(u_0)-2(\sigma (N-1) -2s)\int_{\R^N} |(-\Delta )^{{s}/{2}} u(t)|^2\,dx \\
& \quad + C \left( R^{-2s} +\left(1+R^{-\sigma(N-2)} \int_{\R^N} |(-\Delta )^{{s}/{2}} u(t)|^2\,dx\right) \right).
\end{split}
\end{align}
\end{lem}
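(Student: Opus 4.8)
The plan is to follow the proof of Lemma \ref{v} almost verbatim, with the weight $\varphi_R(x) = \psi_R(r) + x_N^2/2$ replaced by $\psi_R(r)$, which now depends on the transverse variable $y$ alone. Because $\psi_R$ has bounded gradient and Hessian and carries no weight in $x_N$, the quantity $\mathcal{M}_{\psi_R}[u]$ is well-defined for every $u \in \Sigma$, so the finite-variance hypothesis $x_N u_0 \in L^2(\R^N)$ is dispensable---this is exactly the gain over Theorem \ref{thm1}. I would first invoke \cite[Lemma 2.1]{BHL} to obtain the analogue of \eqref{m1},
\begin{align*}
\frac{d}{dt}\mathcal{M}_{\psi_R}[u(t)] = \int_0^{\infty}\int_{\R^N} m^s\left(4\sum_{k,l=1}^{N}\overline{\partial_k u_m}\left(\partial_{kl}^2 \psi_R\right)\partial_l u_m - \left(\Delta^2 \psi_R\right)|u_m|^2\right)\,dx\,dm - \frac{2\sigma}{\sigma+1}\int_{\R^N}\left(\Delta \psi_R\right)|u|^{2\sigma+2}\,dx.
\end{align*}
Since $\psi_R$ is independent of $x_N$ one has $\partial_{kN}^2 \psi_R = 0$ for all $k$, so the Hessian sum collapses to $1 \leq k,l \leq N-1$; by cylindrical symmetry of $u_m$ (inherited from $u$, as $(-\Delta+m)^{-1}$ commutes with rotations in $y$) together with \eqref{d}, this sum reduces to $\int_0^{\infty}\int_{\R^N} m^s(\partial_{rr}^2 \psi_R)|\nabla_y u_m|^2\,dx\,dm$, exactly as in Lemma \ref{v}.

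For the kinetic term I would write $\partial_{rr}^2 \psi_R = 1 - (1 - \partial_{rr}^2 \psi_R)$ and discard the nonnegative contribution via $1 - \psi_R'' \geq 0$ from \eqref{property}, reducing matters to $4\int_0^{\infty}\int_{\R^N} m^s|\nabla_y u_m|^2\,dx\,dm$. Running the Plancherel--Fubini computation of \eqref{f1} while keeping only the transverse frequencies $\xi_y = (\xi_1,\dots,\xi_{N-1})$ gives $s\int_{\R^N}|\xi_y|^2|\xi|^{2s-2}|\mathcal{F}(u)|^2\,d\xi$, and the elementary bound $|\xi_y|^2 \leq |\xi|^2$ then yields the clean estimate $\leq 4s\int_{\R^N}|(-\Delta)^{s/2}u|^2\,dx$. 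The bi-Laplacian term is again controlled by \cite[Lemma A.2]{BHL} as $\lesssim R^{-2s}$.

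It remains to treat the nonlinear term. Here $\Delta \psi_R$ is the $(N-1)$-dimensional Laplacian of $\psi_R$, which equals $N-1$ for $r \leq R$ by \eqref{defp}, so splitting as in \eqref{b1} gives
\begin{align*}
\int_{\R^N}\left(\Delta \psi_R\right)|u|^{2\sigma+2}\,dx = (N-1)\int_{\R^N}|u|^{2\sigma+2}\,dx + \int_{\R}\int_{|y|\geq R}\left(\Delta \psi_R - (N-1)\right)|u|^{2\sigma+2}\,dy\,dx_N,
\end{align*}
which is what produces the coefficient $N-1$ in place of $N$. The tail integral \eqref{non} is estimated verbatim as in Lemma \ref{v}: the bounds \eqref{b111}, \eqref{b12}, \eqref{b121} and the resulting \eqref{bs2}, \eqref{s2} deliver $\int_{\R}\int_{|y|\geq R}|u|^{2\sigma+2}\,dy\,dx_N \lesssim R^{-\sigma(N-2)}(1+\int_{\R^N}|(-\Delta)^{s/2}u|^2\,dx)$ for $0 < \sigma \leq s$. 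Assembling the three pieces yields the first inequality of \eqref{rv1}, and substituting the conservation of energy in the form $\int_{\R^N}|u|^{2\sigma+2}\,dx = (\sigma+1)\int_{\R^N}|(-\Delta)^{s/2}u|^2\,dx - (2\sigma+2)E(u_0)$ converts $4s\|(-\Delta)^{s/2}u\|_2^2 - \tfrac{2\sigma(N-1)}{\sigma+1}\|u\|_{2\sigma+2}^{2\sigma+2}$ into $4\sigma(N-1)E(u_0) - 2(\sigma(N-1)-2s)\|(-\Delta)^{s/2}u\|_2^2$, giving the equality form.

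The only genuinely new point relative to Lemma \ref{v} is confirming that discarding the $x_N^2/2$ weight costs nothing in the kinetic term: the transverse-only Hessian still supplies the full coefficient $4s$ precisely because $|\xi_y|^2 \leq |\xi|^2$ forces the resulting Fourier multiplier to be dominated by $|\xi|^{2s}$. Everything else---the bi-Laplacian remainder and, above all, the nonlinear tail estimate \eqref{non}, where the fractional chain rule \eqref{fid} and the $W^{s,1}(\R)$ Sobolev embedding carried the analytic weight---is identical to Lemma \ref{v}, so no further input is needed.
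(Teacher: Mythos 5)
Your proposal is correct and coincides with the paper's proof, which is stated in one line as ``replace $\varphi_R$ by $\psi_R$ in the proof of Lemma \ref{v} and repeat'': your write-up simply fleshes out exactly that, and it correctly identifies and resolves the only point where the repetition is not literal, namely that the purely transverse Hessian yields $4\int_0^\infty\int_{\R^N} m^s|\nabla_y u_m|^2\,dx\,dm = 4s\int_{\R^N}|\xi_y|^2|\xi|^{2s-2}|\mathcal{F}(u)|^2\,d\xi \leq 4s\|(-\Delta)^{s/2}u\|_2^2$ via $|\xi_y|^2\leq|\xi|^2$, while $\Delta\psi_R=N-1$ on $\{r\leq R\}$ produces the coefficient $N-1$ in the nonlinear term. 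The remaining ingredients (the virial identity from \cite[Lemma 2.1]{BHL}, the bi-Laplacian bound $\lesssim R^{-2s}$, the tail estimates \eqref{b111}--\eqref{b121}, and the energy substitution giving the equality form) are used exactly as in the paper.
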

\begin{proof}
Replacing the roles of $\varphi_R$ in the proof of Lemma \ref{v} by $\psi_R$ and repeating the proof of Lemma \ref{v}, we then obtain the desirable conclusion. This completes the proof.
\end{proof}

\begin{proof} [Proof of Theorem \ref{thm11}]
Since $E[u_0]<0$ and $\sigma(N-1)>2s$, by applying Lemma \ref{v1}, then we are able to get that, for $R>0$ large enough,
$$
\frac{d}{dt}\mathcal{M}_{\psi_R}[u(t)] \leq 2\sigma (N-1) E(u_0)<0.
$$
This then immediately implies the desirable conclusion and the proof is completed.
\end{proof}

\section*{Statements and Declarations} 
\subsection*{Conflict of interests}The authors declare that there are no conflict of interests.
\subsection*{Acknowledgements} T. Gou was supported by the National Natural Science Foundation of China (No. 12101483) and the Postdoctoral Science Foundation of China. The research of V.D. R\u adulescu was supported by the grant ``Nonlinear Differential Systems in Applied Sciences" of the Romanian Ministry of Research, Innovation and Digitization, within PNRR-III-C9-2022-I8/22. Z. Zhang was supported by the National Key R $\&$ D Program of China (2022YFA1005601) and the National Natural Science Foundation of China (No. 12031015). T. Gou warmly thanks Dr. Luigi Forcella for fruitful discussions on the proof of Theorem \ref{thm11}.
\


\begin{thebibliography}{99}

\bibitem{BF} {J. Bellazzini, L. Forcella:} {\it Dynamical collapse of cylindrical symmetric dipolar Bose-Einstein condensates}, Calc. Var. Partial Differential Equations 60 (6) (2021) Paper No. 229, 33 pp.

\bibitem{BFG} {J. Bellazzini, L. Forcella, V. Georgiev:} {\it Ground state energy threshold and blow-up for NLS with competing nonlinearities}, Ann. Sc. Norm. Super. Pisa Cl. Sci. (5) 24 (2) (2023) 955-988.

\bibitem{BHL} {T. Boulenger, D. Himmelsbach, E. Lenzmann:} {\it Blowup for fractional NLS}, J. Funct. Anal. 271 (9) (2016) 2569-2603.

\bibitem{Cai} {D. Cai, A.J. Majda, D.W. McLaughlin, E.G. Tabak:} {\it Dispersive wave turbulence in one dimension}, Phys. D 152/153 (2001) 551-572, Advances in nonlinear mathematics and science.

\bibitem{CO} {Y. Cho, T. Ozawa:} {\it Sobolev inequalities with symmetry}, Commun. Contemp. Math. 11 (3) (2009) 355-365.
 

\bibitem{DF} {V.D. Dinh, L. Forcella:} {\it Blow-up results for systems of nonlinear Schrödinger equations with quadratic interaction}, Z.~Angew. Math. Phys. 72 (5) (2021) Paper No. 178, 26 pp.


\bibitem{ES} {A. Elgart, B. Schlein:} {\it Mean field dynamics of boson stars}, Comm. Pure Appl. Math. 60 (4) (2007) 500-545.

\bibitem{F} {L. Forcella:} {\it On finite time blow-up for a 3D Davey-Stewartson system}, Proc. Amer. Math. Soc. 150 (12) (2022) 5421-5432. 

\bibitem{FL} {R.L. Frank, E. Lenzmann:} {\it Uniqueness of non-linear ground states for fractional Laplacians in $\R$}, Acta Math. 210 (2) (2013) 261-318.

\bibitem{FLS} {R.L. Frank, E. Lenzmann, L. Silvestre:} {\it Uniqueness of radial solutions for the fractional Laplacian}, Comm. Pure Appl. Math. 69 (9) (2016) 167-1726. 

\bibitem{Gl} {R.T. Glassy:} {\it On the blowing up of solutions to the Cauchy problem for nonlinear Schr\"odinger equations}, J. Math. Phys. 18 (9) (1977) 1794-1797.

\bibitem{Gou} {T. Gou:} {\it Blowup of cylindrically symmetric solutions for biharmonic NLS}, arXiv:2205.08167.

\bibitem{HR} {J. Holmer, S. Roudenko:} {\it A sharp condition for scattering of the radial 3D cubic nonlinear
Schr\"odinger equation}, Comm. Math. Phys. 282 (2) (2008) 435-467.


\bibitem{HS} {Y. Hong, Y. Sire:} {\it On fractional Schr\"odinger equations in Sobolev spaces}, Commun. Pure Appl. Anal. 14 (6) (2015) 2265-2282.


\bibitem{KLS} {K. Kirkpatrick, E. Lenzmann, G. Staffilani:} {\it On the continuum limit for discrete NLS with long-range interactions}, Comm. Math. Phys. 317 (3) (2013) 563-591.

\bibitem{L1} {N. Laskin:} {\it Fractional quantum mechanics and L\'evy path integrals}, Phys. Lett. A 268 (4-6) (2000) 298-304.

\bibitem{L2} {N. Laskin:} {\it Fractional Schr\"odinger equations}, Phys. Rev. E 66 (5) (2002) 056108.

\bibitem {MMT} {A.J. Majda, D.W. McLaughlin, E. G. Tabak:} {\it A one-dimensional model for dispersive wave turbulence}, J. Nonlinear Sci. 7 (1) (1997) 9-44.

\bibitem{M} {Y. Martel:} {\it Blow-up for the nonlinear Schr\"odinger equation in nonisotropic spaces}, Nonlinear Anal. 28 (12) (1997) 1903-1908.

\bibitem{OT} {T. Ogawa, Y. Tsutsumi:} {\it Blow-up of $H^1$ solution for the nonlinear Schr\"odinger equation}, J. Differential Equations 92 (2) (1991) 317-330.


\bibitem{Park} {Y.J. Park:} {\it Fractional Gagliardo-Nirenberg inequality}, J. Chungcheong Math. Soc. 24 (3) (2011) 583-583.

\end{thebibliography}
\end{document}